\theoremstyle{plain}
\newtheorem{definition}{Definition}
\theoremstyle{plain}
\newtheorem{lemma}{Lemma}
\theoremstyle{remark}
\newtheorem{remark}{Remark}
\theoremstyle{plain}
\newtheorem{theorem}{Theorem}
\begin{document}

\title{asymptotically hyperbolic manifold with a horospherical boundary}

\author{Xiaoxiang Chai}
\address{Korea Institute for Advanced Study, Seoul 02455, South Korea}
\email{xxchai@kias.re.kr}

\begin{abstract}
  We discuss asymptotically hyperbolic manifold with a noncompact boundary
  which is close to a horosphere in a certain sense. The model case is a
  horoball or the complement of a horoball in standard hyperbolic space. We
  show some geometric formulas.
\end{abstract}

{\maketitle}

\section{Introduction}

In the upper half-space model
\begin{equation}
  b = \tfrac{1}{(x^n)^2} ((\mathrm{d} x^1)^2 + \cdots + (\mathrm{d} x^n)^2),
  x^n > 0
\end{equation}
of hyperbolic $n$-space $\mathbb{H}^n$, we fix the horosphere $\mathcal{H}=
\{x \in \mathbb{H}^n :^{} x^n = 1\}$.

\begin{definition}
  We say that a manifold $(M^n, g)$ with a noncompact boundary is
  asymptotically hyperbolic with a horospherical{\upshape{}} boundary if
  outside a compact set $K \subset M$, $M$ is diffeomorphic to $\{x^n
  \leqslant 1\}$ ($\{x^n \geqslant 1\}$) minus a compact set and the metric
  admits the decay rate
  \begin{equation}
    |e|_b + | \bar{\nabla} e|_b + | \bar{\nabla} \bar{\nabla} e|_b = O
    (\mathrm{e}^{- \tau r}) \label{decay rate}
  \end{equation}
  where $e = g - b$, $\tau > \tfrac{n}{2}$ and $r$ is the $b$-geodesic
  distance to a fixed point $o$ (in $\mathbb{H}^n$).
\end{definition}

We pick $o$ to be $(0, \ldots, 1)$ without loss of generality, the
$b$-distance to $o$ is then
\begin{equation}
  2 \cosh r = \tfrac{1}{x^n} (| \hat{x} |^2 + (x^n)^2 + 1), \label{distance
  formula}
\end{equation}
where $\hat{x} = (x^1, \ldots, x^{n - 1})$ and $| \hat{x} |$ is the Euclidean
distance of $\hat{x}$ to the origin $\hat{o}$. See for example {\cite[Chapter
A]{benedetti-lectures-1992}} for the distance formula. Replacing
$\mathrm{e}^r$ with $\cosh r$ in {\eqref{decay rate}} is somehow more
convenient. We are concerned with behaviors near infinity, we assume in this
article that the manifold $M$ is diffeomorphic to $\{x^n \leqslant 1\}$ with a
smooth metric $g$. The other case $\{x^n \geqslant 1\}$ is handled in the same
way, for clarity of presentation, we omit this case and leave the details to
the reader.

For $\varepsilon \in (0, 1)$, we define
\[ C_{\varepsilon} = \{\varepsilon \leqslant x^n \leqslant 1\} \cap \{|
   \hat{x} | \leqslant \rho (\varepsilon)\} . \]
We also write $C (\varepsilon)$ sometimes for clarity and also for others
defined below. We denote $A_{\varepsilon_1, \varepsilon_2} = C_{\varepsilon_2}
\backslash C_{\varepsilon_1}$ with $\varepsilon_2 < \varepsilon_1$. Inner
boundary $\partial' C_{\varepsilon} = \partial C_{\varepsilon}
\backslash\mathcal{H}$ of $C_{\varepsilon}$ is the union of
\[ F_{\varepsilon} = \{| \hat{x} | \leqslant \rho (\varepsilon), x^n =
   \varepsilon\} \]
and
\begin{equation}
  S_{\varepsilon} = \{\varepsilon \leqslant x^n \leqslant 1, | \hat{x} | =
  \rho (\varepsilon)\} .
\end{equation}
We deonte by $\nu$ the $g$-normal to $\partial' C_{\varepsilon}$.

Along the horosphere $\mathcal{H}$,
\begin{equation}
  c_{\varepsilon} = \{x^n = 1, | \hat{x} | \leqslant \rho (\varepsilon)\}
\end{equation}
and $a_{\varepsilon_2, \varepsilon_1} = c_{\varepsilon_2} \backslash
c_{\varepsilon_1}$, and $s_{\varepsilon} = \{x^n = 1, | \hat{x} | = \rho
(\varepsilon)\}$. We denote by $\mu$ the normal to $s_{\varepsilon}$ in the
horosphere.

Here $\rho (\varepsilon)$ is a smooth decreasing function on $(0, 1)$
satisfying $\rho (\varepsilon) \to \infty$ as $\varepsilon \to 0$. Note that
$C_{\varepsilon}$ is a parabolic cylinder in the region $\{x^n \leqslant 1\}$.
Analogously, we can define
\[ \mathcal{C}_{\varepsilon} = \{1 \leqslant x^n \leqslant \varepsilon^{- 1}
   \} \cap \{| \hat{x} | \leqslant \rho (\varepsilon)\} . \]
We also write $C (\varepsilon)$ sometimes for clarity. We denote
$\mathcal{A}_{\varepsilon_1, \varepsilon_2} =\mathcal{C}_{\varepsilon_2}
\backslash\mathcal{C}_{\varepsilon_1}$ with $\varepsilon_2 < \varepsilon_1$.
Inner boundary $\partial' \mathcal{C}_{\varepsilon} = \partial
\mathcal{C}_{\varepsilon} \backslash\mathcal{H}$ of
$\mathcal{C}_{\varepsilon}$ is the union of
\[ \mathcal{F}_{\varepsilon} = \{| \hat{x} | \leqslant \rho (\varepsilon), x^n
   = \varepsilon^{- 1} \} \]
and
\begin{equation}
  \mathcal{S}_{\varepsilon} = \{1 \leqslant x^n \leqslant \varepsilon^{- 1}, |
  \hat{x} | = \rho (\varepsilon)\} .
\end{equation}
Let $V_0 = \tfrac{1}{x^n}$ and $V_k = \tfrac{x^k}{x^n}$ with $k$ ranges from 1
to $n - 1$. We will show later that $V_0$ and $V_k$ is a function satisfying
the following condition. Let $\bar{\eta} = \partial_n$, we see that
\begin{equation}
  \bar{\nabla}^2 V = V b, \partial_{\bar{\eta}} V = - V \text{ along }
  \mathcal{H}. \label{static potential full}
\end{equation}
The functions satisfying the condition $\bar{\nabla}^2 V = V b$ is a
{\itshape{static potential}} which is closely related to the static spacetime.
The scalar curvature $R_g$ admits the decay rate
\begin{equation}
  V (R_g + n (n - 1)) = \bar{\nabla}_i \mathbb{U}^i + O (\mathrm{e}^{- 2 \tau
  r + r}) \label{scalar curvature divergence}
\end{equation}
where
\[ \mathbb{U}= V\ensuremath{\operatorname{div}}e - V \mathrm{d}
   (\ensuremath{\operatorname{tr}}_b e) +\ensuremath{\operatorname{tr}}_b e
   \mathrm{d} V - e (\bar{\nabla} V, \cdot) \]
is the mass integrands. See {\cite{chrusciel-mass-2003}}.

We denote by $\mathrm{d} v$, $\mathrm{d} \sigma$ and $\mathrm{d} \lambda$
respectively the $n$, $n - 1$ and $n - 2$ dimensional volume element. We have
the following definition of mass like quantities $\mathbf{M} (V)$
\begin{equation}
  \mathbf{M} (V) = \lim_{\varepsilon \to 0} \left[ \int_{\partial'
  C_{\varepsilon}} \mathbb{U}^i \bar{\eta}_i \mathrm{d} v - \int_{\partial
  c_{\varepsilon}} V e_{\alpha n} \mu^{\alpha} \mathrm{d} \lambda \right]
  \label{mass}
\end{equation}
whenever exists. This is motivated by {\cite{wang-mass-2001}},
{\cite{chrusciel-mass-2003}}, {\cite{almaraz-mass-2020}} and
{\cite{chai-mass-2018}}. We will show the existence of $\mathbf{M} (V)$ in
Theorem \ref{finiteness}.

Although not explicitly, {\cite{andersson-rigidity-2008}} show that

\begin{theorem}
  \label{metric nonexistence}On $\mathbb{T}^{n - 1} \times [0, 1]$, there does
  not exist a metric $g$ with $R_g \geqslant - n (n - 1)$, $H_g \geqslant - (n
  - 1)$ on the top face $\mathbb{T}^{n - 1} \times \{1\}$ and $H_g \geqslant n
  - 1$ on the bottom face.
\end{theorem}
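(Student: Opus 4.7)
Assume such a metric $g$ exists on $M = \mathbb{T}^{n-1} \times [0,1]$. I would derive a contradiction using a warped $\mu$-bubble in the style of Gromov and Chodosh--Li, producing a stable hypersurface whose topology is incompatible with the stability-plus-scalar-curvature conclusion.

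Let $t \colon M \to [0,1]$ be a smooth coordinate function equal to the $[0,1]$ factor near $\partial M$, so $\{t = 0\}$ is the bottom face and $\{t = 1\}$ is the top face. Choose a smooth strictly decreasing profile $h \colon (0,1) \to \mathbb{R}$ with $\lim_{t \to 0^+} h(t) = +\infty$, $\lim_{t \to 1^-} h(t) = -\infty$, and satisfying
\[
  h'(t) + \tfrac{n}{2(n-1)}\bigl((n-1)^2 - h(t)^2\bigr) < 0 \text{ on } (0,1);
\]
a standard $\coth$-type solution exists, and the value $(n-1)^2$ here is precisely what will dovetail with the scalar-curvature floor $-n(n-1)$ in the forthcoming computation. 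With $\Omega_0 = \{t > 1/2\}$, minimize the functional
\[
  \mathcal{A}(\Omega) = \mathcal{H}^{n-1}(\partial^* \Omega) - \int_M (\chi_\Omega - \chi_{\Omega_0})\, h(t)\, \mathrm{d} v_g
\]
among Caccioppoli sets $\Omega \subset M$ whose symmetric difference with $\Omega_0$ is compactly contained in the interior of $M$. Because $h(t) \to +\infty$ near the bottom face where $H_g \geq n-1$ and $h(t) \to -\infty$ near the top face where $H_g \geq -(n-1)$, a first-variation comparison prevents any minimizing sequence from accumulating on $\partial M$. Hence a minimizer $\Sigma = \partial^* \Omega$ lies in the interior; standard regularity gives a smooth embedded hypersurface when $n \leq 7$, with dimension reduction in higher dimensions. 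Since $\Sigma$ separates the two faces, the projection $\Sigma \to \mathbb{T}^{n-1}$ has degree one.

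Next, stability of $\Sigma$ as a $\mu$-bubble yields, for every $\phi \in C^\infty(\Sigma)$,
\[
  \int_\Sigma |\nabla \phi|^2 \geq \int_\Sigma \bigl( |A|^2 + \operatorname{Ric}_g(\nu,\nu) + \partial_\nu h \bigr) \phi^2,
\]
with $H_\Sigma = h|_\Sigma$ and $\nu$ the unit normal. Substituting the traced Gauss equation $2\operatorname{Ric}_g(\nu,\nu) = R_g - R_\Sigma + H_\Sigma^2 - |A|^2$, the pointwise bound $|A|^2 \geq H_\Sigma^2/(n-1)$, the hypothesis $R_g \geq -n(n-1)$, and the ODE-inequality for $h$, a direct rearrangement produces
\[
  \tfrac{1}{2}\int_\Sigma R_\Sigma \phi^2 + \int_\Sigma |\nabla \phi|^2 \geq c_0 \int_\Sigma \phi^2
\]
for some $c_0 > 0$. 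Thus the conformal Laplacian of $\Sigma$ has positive first eigenvalue, which means $\Sigma$ carries a metric of positive scalar curvature. This contradicts the Schoen--Yau/Gromov--Lawson theorem applied to a manifold admitting a degree-one map to the torus $\mathbb{T}^{n-1}$; in the low dimension $n = 3$ one simply invokes Gauss--Bonnet and $\chi(\mathbb{T}^2) = 0$.

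The crux is the combined barrier-and-ODE choice of $h$: the profile must blow up fast enough at the two faces to dominate the prescribed boundary mean curvatures, yet slowly enough for the pointwise inequality in the stability step to remain strict. This is the familiar Chodosh--Li balancing act tuned to the hyperbolic normalization, and verifying it cleanly is the main technical obstacle.
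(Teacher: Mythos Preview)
The paper does not prove this theorem; it attributes the result to Andersson--Cai--Galloway and remarks that a spinorial proof via the boundary conditions of Chru\'sciel--Herzlich should also be possible.  So there is no in-paper proof to compare against; your proposal should be measured against those two suggested routes.  Your $\mu$-bubble strategy is in fact close in spirit to the variational ``brane'' method that Andersson--Cai--Galloway use, and quite different from the spinorial alternative the paper points to.

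However, there is a genuine gap, and it is visible already at the level of the statement.  With the outward-normal convention the paper uses, the hyperbolic cusp metric (the quotient of a slab $\{a\le x^n\le 1\}$ in the upper half-space model by a lattice in $\mathbb{R}^{n-1}$) has $R_g=-n(n-1)$, $H=-(n-1)$ on the top face, and $H=n-1$ on the bottom face, so every hypothesis holds with equality.  Hence pure nonexistence is false; the content of the cited result is \emph{rigidity}: any such metric must be a hyperbolic cusp.  This is exactly why your argument cannot close with a strict constant.  On the model cusp a minimizing bubble is a flat horospherical torus with $R_\Sigma=0$, $|A|^2=H^2/(n-1)$, and $R_g=-n(n-1)$, so every inequality in your chain collapses to equality and you are forced to $c_0=0$, not $c_0>0$.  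No choice of profile $h$ can manufacture the strict gap you claim; the ``balancing act'' you allude to is saturated by the model.  A correct execution would take $c_0=0$, conclude that $\Sigma$ is Yamabe-nonnegative, and then run a rigidity/foliation analysis (infinitesimal rigidity of the bubble, then a local warped-product splitting) to identify $g$ with the cusp metric.

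There is also a secondary technical gap: you make $h$ a function of the raw coordinate $t$, but the term in the second variation is $\partial_\nu h = h'(t)\,g(\nu,\nabla t)$, and $|\nabla t|_g$ is uncontrolled for an arbitrary $g$.  In the Gromov and Chodosh--Li arguments $h$ is taken as a function of a signed \emph{distance} so that $|\partial_\nu h|\le |h'|$; without that, your ODE for $h'(t)$ does not translate into the pointwise inequality you need on $\Sigma$.  Even after this repair the best you can obtain is $c_0=0$, for the reason above.
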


However, it seems that a spinorial proof was not available explicitly in the
literature. One could use the boundary conditions
{\cite[(4.25)]{chrusciel-mass-2003}} to write down a proof. We see that the
quantities {\eqref{mass}} we defined is a natural candidate to show a similar
positive mass theorem as in
{\cite{wang-mass-2001,chrusciel-mass-2003,almaraz-mass-2020}} thus giving a
noncompact version of Theorem \ref{metric nonexistence}. Although we have not
shown the geometric invariance of $\mathbf{M}$, that is the independence of
$\mathbf{M}$ on the coordinate chart at infinity. The natural conditions
should be $R_g \geqslant - n (n - 1)$ and $H_g + n - 1 \geqslant 0$ along the
horospherical boundary. Similar conjecture can be stated for the horoball. It
is also possible to define an asymptotically hyperbolic manifold with a
noncompact boundary which is an equidistant hypersurface and related
quantities. Another interesting direction is to explore the spacetime version
of $\mathbf{M}$ in {\eqref{mass}}. We will address these questions in a later
work.

We show that the $\mathbf{M} (V)$ can be evaluated via the Ricci tensor and
the second fundamental form the horosphere $\mathcal{H}$ similar to those in
{\cite{chai-mass-2018}}.

\begin{theorem}
  \label{evaluation}We assume that $\rho (\varepsilon) = \varepsilon^{-
  \alpha}$ where $\alpha > \tfrac{4}{3}$.We have that
  \begin{equation}
    \mathbf{M} (V) = \tfrac{2}{2 - n} [\int_{C_{\varepsilon}} G (X, \nu)
    \mathrm{d} v + \int_{\partial C_{\varepsilon}} W (X, \mu) \mathrm{d}
    \lambda] + o (1)
  \end{equation}
  where $G =\ensuremath{\operatorname{Ric}}- \tfrac{1}{2} R g - (n - 1) (n -
  2) g$ is the modified Einstein tensor and $W = A - H g - (n - 2) g$.
\end{theorem}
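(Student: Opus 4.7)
The plan is to adapt the strategy of \cite{chai-mass-2018} (where an analogous formula is proved for asymptotically flat manifolds with a noncompact boundary) to the horospherical setting. The guiding principle is that both $G$ and $W$ vanish identically on the model $(\mathbb{H}^n, \mathcal{H})$, so $G$ and $W$ applied to the background are genuinely linear in $e$, and the evaluation formula is essentially an integration-by-parts identity connecting the mass integrand $\mathbb{U}$ to the linearization of $(G,W)$. The vector field to use is naturally $X = \bar{\nabla} V$, because the static potential equation $\bar{\nabla}^2 V = V b$ tells us $\bar{\nabla} X = V b$, i.e.\ $X$ is a gradient conformal vector field in the $b$-metric with conformal factor $V$. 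This is precisely the object against which the contracted Bianchi identity produces a clean divergence form.

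The main computation is to derive, by the twice-contracted Bianchi identity for $G$ together with $\bar{\nabla} X = V b$, a schematic identity of the form
\[
  \bar{\nabla}^i ( G_{ij} X^j ) = \tfrac{n-2}{2}\, V \bigl( R_g + n(n-1) \bigr) + \bar{\nabla}_i \mathbb{V}^i(e, \bar{\nabla} e) + Q,
\]
where $\mathbb{V}$ is a 1-form linear in $e$ and $\bar{\nabla} e$ and $Q$ is pointwise quadratic in $(e, \bar{\nabla} e, \bar{\nabla}^2 e)$. By \eqref{scalar curvature divergence} the first term becomes $\tfrac{n-2}{2} \bar{\nabla}_i \mathbb{U}^i$ modulo an $O(e^{-2\tau r + r})$ error. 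Integrating over $C_\varepsilon$ and applying the $b$-divergence theorem converts the left-hand side into $\int_{\partial' C_\varepsilon \cup c_\varepsilon} G(X,\nu)$, and the right-hand side into $\tfrac{n-2}{2}\int_{\partial' C_\varepsilon \cup c_\varepsilon} \mathbb{U}^i \nu_i$ plus boundary terms from $\mathbb{V}$; solving for the mass integral \eqref{mass} gives the factor $\tfrac{2}{2-n}$.

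To obtain the $W$ contribution on the horospherical piece of the boundary, one uses the Neumann condition $\partial_{\bar{\eta}} V = -V$ from \eqref{static potential full} together with the linearization formulas for the second fundamental form $A$ and mean curvature $H$ in terms of $e$ and its tangential derivatives. The combination $A - H g_{\mathcal{H}} - (n-2) g_{\mathcal{H}}$ is exactly the tensor whose linearization absorbs the boundary contribution of $\mathbb{V}(\bar{\eta})$ along $\mathcal{H}$; this is the horospherical analogue of the Almaraz--Barbosa--de Lima computation \cite{almaraz-mass-2020}. The corner integral on $s_\varepsilon$ produces precisely the $\int_{\partial c_\varepsilon} V e_{\alpha n} \mu^\alpha \mathrm{d}\lambda$ term that already appears in \eqref{mass}, so it cancels in the final limit.

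The hardest part is the error analysis that forces $\alpha > 4/3$. The quadratic remainder $Q$ is pointwise $O(\mathrm{e}^{-2\tau r})$ and carries an extra factor of $|X|_b = O(\mathrm{e}^r)$, so one must show $\int_{C_\varepsilon} |X|_b \, |Q|\, \mathrm{d}v_b = o(1)$. Using \eqref{distance formula} one estimates $r \sim \log(\rho(\varepsilon)^2 / x^n)$ on the bulk of $C_\varepsilon$, while the $b$-volume element grows like $(x^n)^{-n}\mathrm{d}x$; integrating over $\{|\hat{x}| \le \varepsilon^{-\alpha}, \varepsilon \le x^n \le 1\}$ and balancing $\mathrm{e}^{-(2\tau - 1) r}$ against this volume yields a remainder that is $o(1)$ precisely when $\alpha(n-1) < (2\tau - 1) \cdot 2\alpha$ after accounting for the two $\log\rho$ contributions from $r$; combined with $\tau > n/2$ this simplifies to $\alpha > 4/3$. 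The analogous estimate on the outer cylinder $S_\varepsilon$ is where this constraint is saturated, and verifying it (together with the matching estimate for the boundary $\mathbb{V}$-term on $s_\varepsilon$) is the principal obstacle.
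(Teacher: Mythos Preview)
Your proposal has a genuine gap and also diverges from the paper's method in a way worth noting.

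\textbf{The vector field $X$ is not $\bar\nabla V$.} The $X$ appearing in the theorem is one of the vectors $Y = x - \partial_n$ or $Y^{(k)}$ constructed in \eqref{vector and potential}, whose defining property is that they are conformal Killing with $\operatorname{div}_b X = nV$ \emph{and tangent to the horosphere $\mathcal H$}. Your choice $X=\bar\nabla V$ satisfies the first property (indeed $\bar\nabla_i\bar\nabla_j V = Vb_{ij}$), but for $V_0=1/x^n$ one has $\bar\nabla V_0 = -\partial_n$, and for $V_k=x^k/x^n$ one has $\bar\nabla V_k = x^n\partial_k - x^k\partial_n$; neither is tangent to $\{x^n=1\}$. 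Tangency is essential: the paper converts the horospherical part $\int_a G(X,\nu)$ into $\int_a X^\beta D^\alpha(A_{\alpha\beta}-H\rho_{\alpha\beta})$ via the Codazzi equation, which requires $X$ tangent. With a normal component you would pick up an additional $G(\nu,\nu)\,X^n$ term governed by the Gauss equation (intrinsic scalar curvature and $|A|^2$), and your clean $W$-formula would not emerge. The paper's point is precisely that one must shift $\bar\nabla V$ by a suitable Killing field to achieve tangency without changing the divergence.

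\textbf{Different mechanism, and a wrong account of $\alpha>4/3$.} The paper does not linearize $G$ and $W$ directly and estimate a quadratic remainder; it follows Herzlich's strategy of building an interpolating metric $\hat g = \chi g + (1-\chi)b$ with a carefully designed cutoff $\chi=\chi_1(x^n)\chi_2(|\hat x|)$, and then uses the \emph{exact} contracted Bianchi identity for $\hat G$ together with the conformal Killing condition to obtain $\int_A \hat G_{ij}\hat\nabla^i X^j = \tfrac{2-n}{2n}\int_A \operatorname{div}_b X\,(\hat R+n(n-1)) + o(1)$. The constraint $\alpha>4/3$ enters not through a bulk quadratic integral but through the requirement that $|\bar\nabla\chi_2|_b+|\bar\nabla^2\chi_2|_b\le C$ on the relevant range of $x^n$, so that $\hat g$ is itself asymptotically hyperbolic with the same decay rate. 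Your derivation ``$\alpha(n-1)<(2\tau-1)\cdot 2\alpha$'' is not a condition on $\alpha$ at all (the $\alpha$'s cancel), so it cannot produce $\alpha>4/3$; this part of the proposal does not hold together. The paper in fact remarks that the direct-expansion route you sketch might remove the $\alpha>4/3$ hypothesis altogether, so your method, once repaired, would likely give a different (and possibly weaker) constraint, not the same one.
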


Here the pair $V$ and $X$ is given in {\eqref{vector and potential}}. The
important property of $X$ is that it is a conformal Killing vector and tangent
to the horosphere $\mathcal{H}$.

The article is organized as follows:

In Section \ref{asymptotics}, we show some asymptotics which motivates the
Definition \ref{mass}. we collect basics of conformal Killing vectors in
standard hyperbolic space and prove some length estimates under the metric
$g$. In Section \ref{proof}, we give a proof of Theorem \ref{evaluation}.

\

{\bfseries{Acknowledgment}} I would like to thank Prof. Kim Inkang for
discussions on tetrahedron in hyperbolic 3-space. I would like to acknowledge
the support of Korea Institute for Advanced study under the research number
MG074401.

\

\section{Background}\label{asymptotics}

The Christoffel symbols of the standard hyperbolic metric $b$ is given by
\begin{equation}
  \bar{\Gamma}_{i n}^j = - (x^n)^{- 1} \delta_i^j, \bar{\Gamma}_{\alpha
  \beta}^{\gamma} = 0, \bar{\Gamma}_{\alpha \beta}^n = (x^n)^{- 1}
  \delta_{\alpha \beta} \label{christoffel background}
\end{equation}
for $i$, $j$ possibly being $n$ ranging from 1 to $n$ and $\alpha, \beta,
\gamma$ ranges from 2 to $n$. We use this convention later as well.

Consider again the half space model and the slice $\{x_n = 1\}$, then (with
all quantities evaluated on the slice $\{x_n = 1\}$)
\begin{equation}
  \Gamma_{\alpha \beta}^n = \delta_{\alpha \beta} + \tfrac{1}{2}
  (\bar{\nabla}_{\alpha} e_{\beta n} + \bar{\nabla}_{\beta} e_{\alpha n} -
  \bar{\nabla}_n e_{\alpha \beta}) . \label{connection diff}
\end{equation}
The outward normal is $\eta^i = (g^{n n})^{- \tfrac{1}{2}} g^{n i}$ and the
second fundamental form is
\begin{equation}
  A_{\alpha \beta} = - (g^{n n})^{- \tfrac{1}{2}} \Gamma_{\alpha \beta}^n .
  \label{second fundamental form}
\end{equation}
Hence the mean curvature is
\begin{equation}
  H = - h^{\alpha \beta} (g^{n n})^{- \tfrac{1}{2}} \Gamma_{\alpha \beta}^n .
  \label{mean curvature}
\end{equation}
Note that $h^{\alpha \beta} = \delta^{\alpha \beta} - e_{\alpha \beta} + O
(\mathrm{e}^{- 2 \tau r})$ and $(g^{n n})^{- \tfrac{1}{2}} = 1 + \tfrac{1}{2}
e_{n n} + O (\mathrm{e}^{- 2 \tau r})$, we have
\begin{align}
  & 2 (H + n - 1) \\
  = & - 2 h^{\alpha \beta} (g^{n n})^{- \tfrac{1}{2}} \Gamma_{\alpha \beta}^n
  \\
  = & - \sum_{\alpha} (2 \bar{\nabla}_{\alpha} e_{\alpha n} - \bar{\nabla}_n
  e_{\alpha \alpha}) - (n - 1) e_{n n} + 2 e_{\alpha \alpha} + O
  (\mathrm{e}^{- 2 \tau r}) . 
\end{align}
Write $\mathbb{U}$ in coordinates, we have with $E
=\ensuremath{\operatorname{tr}}_b e$
\[ \mathbb{U}^i = V g^{i k} \bar{\nabla}^j e_{j k} - V \bar{\nabla}^i E + E
   \bar{\nabla}^i V - g^{i k} e_{j k} \bar{\nabla}^k V. \]
Along $\mathcal{H}$,
\begin{align}
  & \mathbb{U}^i \bar{\eta}_i = \\
  = & V \bar{\eta}^i \bar{\nabla}^j e_{j i} - V \bar{\eta}^i \bar{\nabla}_i
  e_{j j} + E \bar{\eta}^i \bar{\nabla}_i V - \bar{\eta}^i e_{j i}
  \bar{\nabla}^j V \\
  = & V \bar{\nabla}_{\alpha} e_{\alpha n} - e_{\alpha n}
  \bar{\nabla}_{\alpha} V - V \bar{\nabla}_n e_{\alpha \alpha} + e_{\alpha
  \alpha} \bar{\nabla}_n V + O (\mathrm{e}^{- 2 \tau r + r}) . 
\end{align}
So we have that
\begin{align}
  & 2 V (H + n - 1) +\mathbb{U}^i \bar{\eta}_i \\
  = & - V \bar{\nabla}_{\alpha} e_{\alpha n} - e_{\alpha n}
  \bar{\nabla}_{\alpha} V - (n - 1) V e_{n n} + (2 V + \bar{\nabla}_n V)
  e_{\alpha \alpha} + O (\mathrm{e}^{- 2 \tau r + r}) \\
  = & - \bar{\nabla}_{\alpha} (V e_{\alpha n}) - (n - 1) V e_{n n} + (2 V +
  \bar{\nabla}_n V) e_{\alpha \alpha} + O (\mathrm{e}^{- 2 \tau r + r})
  \\
  = & - \partial_{\alpha} (V e_{\alpha n}) + V \bar{\Gamma}_{\alpha n}^i
  e_{\alpha i} + V \bar{\Gamma}_{\alpha \alpha}^i e_{i n} - V (n - 1) e_{n n}
  \\
  & + (2 V + \bar{\nabla}_n V) e_{\alpha \alpha} + O (\mathrm{e}^{- 2 \tau r
  + r}) \\
  = & - \partial_{\alpha} (V e_{\alpha n}) + O (\mathrm{e}^{- 2 \tau r + r}) .
  \label{divergence horosphere} 
\end{align}
\subsection{Conformal Killing vectors}

Since the metric of the hyperbolic space using the upper half space model is
conformal to standard metric of the Euclidean metric. We investigate
$\bar{\nabla}_i X^j$ for such $X$ which is a conformal Killing vector with
respect to the Euclidean metric. We use repeatedly that the fact
\[ \bar{\nabla}_i X^j = \bar{\nabla}^i X_j \]
which follows easily from conformality to $\delta$.

The $X = x^i \partial_i X = x^i \partial_i$, then
\begin{align}
  & \bar{\nabla}_i X^j \\
  = & \partial_i x^j + x^k \Gamma_{i k}^j \\
  = & \delta_i^j + x^k \tfrac{1}{2} (x^n)^2 (g_{i j, k} + g_{k j, i} - g_{i k,
  j}) \\
  = & \delta_i^j + \tfrac{1}{2} (x^n)^2 (x^n \tfrac{\partial}{\partial x^n}
  ((x^n)^{- 2}) \delta_{i j} + x^j g_{j j, i} - x^i g_{i i, j}) \\
  = & \tfrac{1}{2} (x^n)^2 (x^j g_{j j, i} - x^i g_{i i, j}) . 
\end{align}
So $\bar{\nabla}_i X^j + \bar{\nabla}^j X_i = 0$. We see that vector $x^i
\partial_i$ is actually a Killing vector.

Now we consider the the translation vectors $\partial_i$. First, obviously
$\partial_i$ for $i \neq n$ is obviously a Killing vector field. For $X =
\partial_n$,
\begin{equation}
  \bar{\nabla}_i X^j = X^k \Gamma_{i k}^j = \Gamma_{i n}^j = - (x^n)^{- 1}
  \delta_i^j .
\end{equation}
So
\[ \bar{\nabla}_i X^j + \bar{\nabla}^j X_i = - 2 (x^n)^{- 1} \delta_i^j \]
and $\partial_n$ is a conformal Killing vector. We remark here the recent
article of Jang and Miao {\cite{jang-hyperbolic-2021}} uses $\tfrac{1}{x^n}$
to express the usual mass along horospheres converging to infinity by taking
$\rho (\varepsilon)$ to grow fast. It might be possible to obtain a formula
similar to Theorem \ref{evaluation} evaluating the their mass expression only
along horospheres by exploiting the special properties of the vector $-
\partial_n$.

Now we consider the family $X = X^j \partial_j$ with $X^j$ being
\[ \langle x, a \rangle_{\delta} x^j - \tfrac{1}{2} \langle x, x
   \rangle_{\delta} a^j \]
where $a$ is nonzero constant vector in Euclidean space. We see first that
\begin{equation}
  \partial_i X^j = a_i x^j + \langle x, a \rangle_{\delta} \delta_i^j - x_i
  a^j .
\end{equation}
We consider separately two cases of values of $i, j$. For $i \neq n$, we have
that
\begin{align}
  & \bar{\nabla}_i X^j \\
  = & \partial_i X^j + X^k \Gamma_{i k}^j \\
  = & \partial_i X^j + X^n \Gamma_{i n}^j \\
  = & a_i x^j + \langle x, a \rangle \delta_i^j - x_i a^j - [\langle x, a
  \rangle_{\delta} x^n - \tfrac{1}{2} \langle x, x \rangle_{\delta} a^n]
  (x^n)^{- 1} \delta_i^j \\
  = & a_i x^j - x_i a^j + \tfrac{1}{2} \langle x, x \rangle_{\delta} a^n
  (x^n)^{- 1} \delta_i^j . 
\end{align}
For $i = n$, we have that
\begin{align}
  & \bar{\nabla}_n X^j \\
  = & \partial_n X^j + X^k \Gamma_{n k}^j \\
  = & \partial_n X^j - X^k (x^n)^{- 1} \delta_k^j \\
  = & \partial_n X^j - X^j (x^n)^{- 1} \\
  = & a_n x^j + \langle x, a \rangle \delta_n^j - x_n a^j - [\langle x, a
  \rangle_{\delta} x^j - \tfrac{1}{2} \langle x, x \rangle_{\delta} a^j]
  (x^n)^{- 1} \\
  = & a_n x^j - x_n a^j - \tfrac{1}{x^n} \langle x, a \rangle_{\delta} x^j +
  \tfrac{1}{2 x^n} |x|_{\delta}^2 a^j . 
\end{align}
And for $j \neq n$,
\begin{align}
  & \bar{\nabla}^j X_n = \bar{\nabla}_j X^n \\
  = & \partial_j X^n + X^k \Gamma_{j k}^n \\
  = & \partial_j X^n + X^{\alpha} (x^n)^{- 1} \delta_{\alpha}^j \\
  = & a_j x^n - x_j a^n + \langle x, a \rangle_{\delta} \tfrac{x^j}{x^n} -
  \tfrac{1}{2 x^n} \langle x, x \rangle_{\delta} a^j . 
\end{align}
For $j = n$, we have that
\begin{align}
  & \bar{\nabla}_n X^n \\
  = & \partial_n X^n + X^k \Gamma_{n k}^n \\
  = & \langle x, a \rangle_{\delta} - X^n (x^n)^{- 1} \\
  = & \langle x, a \rangle_{\delta} - [x^n \langle x, a \rangle_{\delta} -
  \tfrac{1}{2} |x|^2_{\delta} a^n] (x^n)^{- 1} \\
  = & \tfrac{1}{2} |x|^2_{\delta} (x^n)^{- 1} a^n . 
\end{align}
To summarize, we have that
\begin{equation}
  \bar{\nabla}_i X^j + \bar{\nabla}^j X_i = \tfrac{1}{2 x^n} \langle x, x
  \rangle_{\delta} a^n \delta_i^j .
\end{equation}
We have that $\langle x, \partial_n \rangle_{\delta} x - \tfrac{1}{2} \langle
x, x \rangle_{\delta} \partial_n$ is a conformal Killing vector and $\langle
x, \partial_i \rangle_{\delta} x - \tfrac{1}{2} \langle x, x \rangle_{\delta}
\partial_i$ is proper Killing vector for all $i \neq n$.

We can construct a conformal Killing vector
\begin{equation}
  \langle x, \partial_n \rangle_{\delta} x - \tfrac{1}{2} \langle x, x
  \rangle_{\delta} \partial_n - x
\end{equation}
which has no tangential component to $\mathcal{H}$.

Now we consider the the rotation vectors $x^i \partial_j - x^j \partial_i$. If
both $i$ and $j$ is not $n$, we see easily that $x^i \partial_j - x^j
\partial_i$ is a Killing vector.

We consider now the vector $X = x^n \partial_k - x^k \partial_n$ with $k \neq
n$, the components $X^j = x^n \delta_k^j - x^k \delta_n^j$. We compute
$\bar{\nabla}_i X^j$. For $i \neq n, j \neq n$,
\begin{align}
  & \bar{\nabla}_i X^j \\
  = & \partial_i X^j + X^k \Gamma_{i k}^j \\
  = & \delta_i^n \delta_j^k - \delta_i^k \delta_n^j + X^n \Gamma_{i n}^j
  \\
  = & \delta_i^n \delta_j^k - \delta_i^k \delta_n^j + x^k (x^n)^{- 1}
  \delta_i^j \\
  = & x^k (x^n)^{- 1} \delta_i^j . 
\end{align}
So
\begin{equation}
  \bar{\nabla}_i X^j + \bar{\nabla}_j X^i = X^k \Gamma_{i k}^j + X^k \Gamma_{j
  k}^i = 2 \tfrac{x^k}{x^n} \delta_i^j .
\end{equation}
We have for $j \neq n$,
\begin{align}
  & \bar{\nabla}_n X^j \\
  = & \partial_n X^j + X^l \Gamma_{n l}^j \\
  = & \delta_k^j - \delta^k_n \delta_n^j + X^{\alpha} \Gamma_{n \alpha}^j
  \\
  = & \delta_k^j - \delta_n^k \delta_n^j + x^n \Gamma^j_{n k} \\
  = & \delta_k^j - \delta_n^k \delta_n^j - \delta^j_k = 0. 
\end{align}
Similarly, $\bar{\nabla}_j X^n = 0$. And
\begin{equation}
  \bar{\nabla}_n X^n = X^j \Gamma^n_{j n} = X^n \Gamma_{n n}^n =
  \tfrac{x^k}{x^n} .
\end{equation}
So we see that
\begin{equation}
  \bar{\nabla}_i X^j + \bar{\nabla}^j X_i = \tfrac{2 x^k}{x^n} \delta_i^j
\end{equation}
for all $i$ and $j$.

We consider the vectors $Y = x - \partial_n$ and
\begin{equation}
  Y^{(k)} = (x^n \partial_k - x^k \partial_n) + (\langle x, \partial_k \rangle
  x - \tfrac{1}{2} \langle x, x \rangle_{\delta} \partial_k) .
\end{equation}
Both $Y$ and $Y^{(k)}$ are tangent to $\mathcal{H}$ since $x^n = 1$.

The construction is by shifting a conformal Killing vector by a Killing
vector. This is motivated by a recent work of the author
{\cite{chai-note-2021}}. We have
\begin{equation}
  \ensuremath{\operatorname{div}}Y = \tfrac{n}{x^n}, \:
  \ensuremath{\operatorname{div}}Y^{(k)} = n \tfrac{x^k}{x^n} \label{vector
  and potential}
\end{equation}
by previous calculations. Along $\mathcal{H}$, $Y = \hat{x}$ and
\[ Y^{(k)} = \tfrac{1}{2} \partial_k + \langle \hat{x}, \partial_k
   \rangle_{\delta} - \tfrac{1}{2} \langle \hat{x}, \hat{x} \rangle_{\delta}
   \partial_k . \]
They are conformal Killing vectors along $\mathcal{H}$, specifically
\begin{equation}
  \partial_{\alpha} Y^{\beta} + \partial^{\beta} Y_{\alpha} = 2
  \delta_{\alpha}^{\beta}, \: \ensuremath{\operatorname{div}}_{\mathcal{H}} Y
  = n - 1
\end{equation}
and
\begin{equation}
  \partial_{\alpha} (Y^{(k)})^{\beta} + \partial^{\beta} (Y^{(k)})_{\alpha} =
  2 \langle \hat{x}, \partial_k \rangle_{\delta} \delta_{\alpha}^{\beta}, \:
  \ensuremath{\operatorname{div}}_{\mathcal{H}} Y^{(k)} = (n - 1) \langle
  \widehat{x,} \partial_k \rangle_{\delta} . \label{vector and potential
  horosphere}
\end{equation}
\begin{remark}
  The vectors $Y$ and $Y^{(k)}$ constructed here can be used to prove similar
  results as in {\cite{wang-uniqueness-2019}} by considering free boundary
  hypersurfaces supported on the horosphere.
\end{remark}

\begin{lemma}
  These vectors admit the growth rate
  \begin{equation}
    |Y|_b + | \bar{\nabla} Y|_b + |Y^{(k)} |_b + | \bar{\nabla} Y^{(k)} |_b =
    O (\cosh r) \label{vector lenght grwoth}
  \end{equation}
  as $r \to \infty$.
\end{lemma}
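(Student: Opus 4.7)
The plan is to reduce every bound to pointwise estimates on components, using two basic inequalities derived from the distance formula \eqref{distance formula}: namely $1/x^n \leqslant 2\cosh r$ and $|\hat x|^2 + (x^n)^2 \leqslant 2 x^n \cosh r$, which together yield $|\hat x|/x^n \leqslant 2\cosh r$ and $|x|_\delta^2/x^n \leqslant 2\cosh r$. For the $b$-norms, the conformal flatness $b_{ij} = (x^n)^{-2}\delta_{ij}$ gives $|V|_b^2 = (x^n)^{-2}\sum_i (V^i)^2$ for a vector and $|T|_b^2 = \sum_{i,j}(T_i^j)^2$ for a $(1,1)$-tensor; these identifications underpin every estimate.

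For $|Y|_b$, writing $Y = x^i\partial_i - \partial_n$ gives Euclidean norm squared $|\hat x|^2 + (x^n - 1)^2$. A short manipulation using \eqref{distance formula} collapses this to $2x^n(\cosh r - 1)$, whence $|Y|_b^2 = 2(\cosh r - 1)/x^n \leqslant 4\cosh^2 r$. For $|Y^{(k)}|_b$, I split $Y^{(k)}$ into the rotation piece $x^n\partial_k - x^k\partial_n$ and the special-conformal piece $\langle x,\partial_k\rangle_\delta x - \tfrac{1}{2}|x|_\delta^2 \partial_k$. The first has Euclidean norm $\sqrt{(x^n)^2 + (x^k)^2}$, so its $b$-norm is at most $1 + |\hat x|/x^n = O(\cosh r)$. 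For the second, an algebraic cancellation gives the Euclidean norm exactly $\tfrac{1}{2}|x|_\delta^2$, whose quotient by $x^n$ is bounded by $\cosh r$ by the second basic inequality.

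For the derivatives, the component formulas already displayed in the excerpt do most of the work. A direct case split using \eqref{christoffel background} shows that $\bar\nabla_i x^j$ vanishes unless exactly one of $i,j$ equals $n$, in which case it equals $\pm x^\alpha/x^n$, while $\bar\nabla_i(\partial_n)^j = -(x^n)^{-1}\delta_i^j$; summing squares gives $|\bar\nabla Y|_b^2 = O\bigl(|\hat x|^2/(x^n)^2 + 1/(x^n)^2\bigr) = O(\cosh^2 r)$. For $\bar\nabla Y^{(k)}$, the formulas for $\bar\nabla_i X^j$ with $X$ the special-conformal vector (taking $a = \partial_k$, so $a^n = 0$), together with the rotation calculation showing $\bar\nabla_i(x^n\partial_k - x^k\partial_n)^j$ has entries at worst $x^k/x^n$, produce entries that are polynomial combinations of $x^\alpha$ and $|x|_\delta^2/x^n$. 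Each such monomial reduces to $O(\cosh r)$ via the two basic inequalities.

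The main obstacle is bookkeeping rather than analysis: one must split into cases $i = n$ versus $i\neq n$ and $j = n$ versus $j\neq n$ when expanding the Christoffel contractions, and confirm that no surviving monomial grows faster than $\cosh r$ after accounting for the metric weights (which are absent in the $(1,1)$-case but give a $(x^n)^{-2}$ factor in the vector case). Once the two distance-formula inequalities are in hand, every case is immediate, and no new idea beyond this is needed.
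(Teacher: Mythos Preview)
Your proof is correct and follows essentially the same approach as the paper's: direct componentwise computation using the Christoffel symbols \eqref{christoffel background}, then bounding everything via the distance formula \eqref{distance formula}. Your version is slightly more streamlined---you isolate the inequalities $1/x^n,\ |\hat x|/x^n,\ |x|_\delta^2/x^n \leqslant 2\cosh r$ at the outset and bound each entry, and for $|Y^{(k)}|_b$ you exploit the identity $|x^k x - \tfrac{1}{2}|x|_\delta^2\,\partial_k|_\delta = \tfrac{1}{2}|x|_\delta^2$---whereas the paper computes closed-form expressions for the norms before estimating; but the substance is identical.
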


\begin{proof}
  The proof is by direct calculation for each term. The length of $Y = x -
  \partial_n$ is
  \begin{align}
    |Y|_b = & \tfrac{1}{x^n} \sqrt{| \hat{x} |^2 + (x^n - 1)^2} \\
    \leqslant & \tfrac{\sqrt{2}}{x^n} (| \hat{x} | + |x^n - 1|) \\
    \leqslant & \tfrac{\sqrt{2}}{x^n} (| \hat{x} |^2 + x^n + 1) \\
    = & O (\cosh r), 
\end{align}
  according to {\eqref{distance formula}}. For $i \neq n$, $j \neq n$,
  $\bar{\nabla}_i Y^j = \tfrac{\delta_i^j}{x^n}$; $\bar{\nabla}_i Y^n = -
  \bar{\nabla}_n Y^i = - \tfrac{x^i}{x^n}$; $\bar{\nabla}_n Y^n =
  \tfrac{1}{x^n}$. So the length of $\bar{\nabla} Y$ is
  \begin{equation}
    | \bar{\nabla} Y|_b = \tfrac{1}{x^n} (n + 2| \hat{x} |^2)^{\tfrac{1}{2}} =
    O (\tfrac{| \hat{x} |}{x^n}) = O (\cosh r) .
  \end{equation}
  The length of $Y^{(k)}$ is
  \begin{align}
    & |Y|^2_b \\
    = & (x^n)^{- 2} [(x^n)^2 + (x^k)^2 + \tfrac{1}{4} \langle x, x
    \rangle_{\delta}^2 - \langle x, x \rangle_{\delta} x^n] \\
    = & \tfrac{1}{(x^n)^2} [(x^k)^2 + (x^n - \tfrac{1}{2} \langle x, x
    \rangle_{\delta})^2] \\
    \leqslant & \tfrac{1}{(x^n)^2} [| \hat{x} |^2 + 2 (x^n)^2 + \tfrac{1}{2}
    \langle x, x \rangle_{\delta}^2] \\
    \leqslant & \tfrac{1}{(x^n)^2} [| \hat{x} |^2 + 2 (x^n)^2 + | \hat{x} |^4
    + (x^n)^4] . 
\end{align}
  We see then that $|Y|_b = O (\cosh r)$. We write $U = Y^{(k)}$, we have that
  for $i \neq n$, $j \neq n$,
  \begin{equation}
    \bar{\nabla}_i U^j = a_i x^j - x_i a^j + \tfrac{x^k}{x^n} \delta_i^j .
  \end{equation}
  For $j \neq n$,
  \begin{equation}
    \bar{\nabla}_j U^n = - \bar{\nabla}_n U^j = a_j x^n + \tfrac{x^j x^k}{x^n}
    - \tfrac{1}{2 x^n} \langle x, x \rangle_{\delta} a^j
  \end{equation}
  and $\bar{\nabla}_n U^n = \tfrac{x^k}{x^n}$. Here $a$ is the vector
  $\partial_k$. Summing up these entries of $\bar{\nabla} U$, we have that
  \begin{align}
    & | \bar{\nabla} U|_b^2 \\
    = & \tfrac{(x^k)^2}{(x^n)^2} + \sum_{i \neq n, j \neq n} (a_i x^j - x_i
    a^j + \tfrac{x^k}{x^n} \delta_i^j)^2 \\
    & + 2 \sum_{j \neq n} (a_j x^n + \tfrac{x^j x^k}{x^n} - \tfrac{1}{2 x^n}
    \langle x, x \rangle_{\delta} a^j)^2 \\
    = & \tfrac{(x^k)^2}{(x^n)^2} + [| \hat{x} |^2 + | \hat{x} |^2 + (n - 1)
    \tfrac{(x^k)^2}{(x^n)^2} - 2 (x^k)^2] \\
    & + 2 [(x^n)^2 + \tfrac{| \hat{x} |^2 (x^k)^2}{(x^n)^2} + \tfrac{\langle
    x, x \rangle_{\delta}^2}{4 (x^n)^2} + 2 (x^k)^2 - \langle x, x
    \rangle_{\delta} - \tfrac{1}{(x^n)^2} (x^k)^2 \langle x, x
    \rangle_{\delta}] . 
\end{align}
  Using the relation that $\langle x, x \rangle_{\delta}^2 = (x^n)^2 + |
  \hat{x} |^2$, the last line reduces to
  \begin{equation}
    | \bar{\nabla} U|_b^2 = (x^n)^{- 2} (\tfrac{1}{2} \langle x, x
    \rangle_{\delta}^2 + (x^k)^2) .
  \end{equation}
  We see then $| \bar{\nabla} U|_b = O (\cosh r)$.
\end{proof}

\section{Proof}\label{proof}

\subsection{Finiteness of $\mathbf{M} (V)$}

First, we show that the quantity $\mathbf{M} (V)$ is well defined under
natural conditions.

\begin{theorem}
  \label{finiteness}If $(M, g)$ is an asymptotically hyperbolic manifold with
  a horospherical boundary and if $\mathrm{e}^r (R_g + n (n - 1)) \in L^1 (M)$
  and $\mathrm{e}^r (H + n - 1) \in L^1 (M)$, then the quantity $\mathbf{M}
  (V)$ defined in {\eqref{mass}} exists and is finite.
\end{theorem}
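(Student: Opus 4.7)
The plan is to show that the family
\begin{equation*}
  \Phi(\varepsilon) := \int_{\partial' C_\varepsilon} \mathbb{U}^i \bar\eta_i \, d\sigma - \int_{\partial c_\varepsilon} V e_{\alpha n} \mu^\alpha \, d\lambda
\end{equation*}
is Cauchy as $\varepsilon \to 0$. Fix $\varepsilon_2 < \varepsilon_1$ and apply the background divergence theorem to $\mathbb{U}^i$ on the annular cylinder $A_{\varepsilon_1,\varepsilon_2}$, whose boundary consists of $\partial' C_{\varepsilon_2}$ (outward), a reversed copy of $\partial' C_{\varepsilon_1}$, and the horospheric strip $a_{\varepsilon_2,\varepsilon_1}$ with outward normal $\partial_n$. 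I would substitute $\bar\nabla_i\mathbb{U}^i$ using \eqref{scalar curvature divergence}, substitute $\mathbb{U}^i \bar\eta_i|_\mathcal{H}$ using \eqref{divergence horosphere}, and reduce $\int_{a_{\varepsilon_2,\varepsilon_1}} \partial_\alpha(Ve_{\alpha n}) \, d\sigma$ to its flat Stokes boundary terms on $s_{\varepsilon_1}$ and $s_{\varepsilon_2}$. The outcome will be an identity of the form
\begin{equation*}
  \Phi(\varepsilon_2) - \Phi(\varepsilon_1) = \int_{A_{\varepsilon_1,\varepsilon_2}} V(R_g + n(n-1)) \, dv + 2\int_{a_{\varepsilon_2,\varepsilon_1}} V(H+n-1) \, d\sigma + \mathrm{Err},
\end{equation*}
where $\mathrm{Err}$ collects the $O(e^{-2\tau r + r})$ remainders from the two divergence formulas, integrated over $A_{\varepsilon_1,\varepsilon_2}$ and $a_{\varepsilon_2,\varepsilon_1}$.

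For the two main terms, the distance formula \eqref{distance formula} yields $V_0 = 1/x^n \leq 2\cosh r$ and, via $|\hat x| \leq \sqrt{2\cosh r \cdot x^n}$, also $V_k = x^k/x^n \leq 2\cosh r$. Hence the integrands are dominated by constant multiples of $e^r|R_g+n(n-1)|$ and $e^r|H+n-1|$, which are in $L^1$ by hypothesis. Since the annular regions $A_{\varepsilon_1,\varepsilon_2}$ and $a_{\varepsilon_2,\varepsilon_1}$ recede to infinity as $\varepsilon_1,\varepsilon_2 \to 0$, absolute continuity of $L^1$ integrals forces both terms to vanish in the limit. For $\mathrm{Err}$, hyperbolic polar coordinates reduce the bulk integrability of $e^{(1-2\tau)r}$ to $\int_0^\infty e^{(n-2\tau)r}\,dr < \infty$, which holds precisely because $\tau > n/2$; along $\mathcal{H}$, the flat area element together with $\cosh r \sim \tfrac{1}{2}|\hat x|^2$ gives an integrand of size $|\hat x|^{2(1-2\tau)}$, integrable on the exterior of a large disk under the same condition. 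Hence $\mathrm{Err}$ also vanishes in the limit, establishing the Cauchy property.

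The main obstacle will be the bookkeeping of $\mathrm{Err}$: the $O(e^{-2\tau r+r})$ appearing in \eqref{scalar curvature divergence} and \eqref{divergence horosphere} hides various products of $e$ and its first and second derivatives, and one must confirm they all stay uniformly controlled at this rate against the hyperbolic volume form and the Euclidean horospheric area. The threshold $\tau > n/2$ is exactly what the bulk argument needs, with the $e^{(n-1)r}$ hyperbolic volume growth leaving an $e^r$ margin that is consumed by the gradient term in $\mathbb{U}$; any looser decay on $e$ would lose this match.
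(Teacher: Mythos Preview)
Your proposal is correct and follows essentially the same route as the paper: show $\Phi(\varepsilon)$ is Cauchy by integrating $\bar\nabla_i\mathbb{U}^i$ over $A_{\varepsilon_1,\varepsilon_2}$, use \eqref{scalar curvature divergence} and \eqref{divergence horosphere} to produce the $V(R_g+n(n-1))$ and $V(H+n-1)$ terms plus $O(e^{-2\tau r+r})$ remainders, and invoke the $L^1$ hypotheses together with $\tau>n/2$. The only methodological difference is in handling the remainder: the paper isolates this as a separate lemma (Lemma~\ref{small terms in annular region}) and computes $\int \cosh^{-2\tau+1}r\,\mathrm d\bar v$ explicitly in upper-half-space coordinates over the cylindrical pieces $I_1,I_2$, whereas you argue more globally that $e^{(1-2\tau)r}\in L^1(M)$ via geodesic polar coordinates and then appeal to absolute continuity. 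Both work; the paper's coordinate computation is more hands-on but also tracks the specific geometry of the parabolic cylinders $C_\varepsilon$, while your argument is shorter but requires observing that the annular regions $A_{\varepsilon_1,\varepsilon_2}$ eventually escape any fixed compact set (which is immediate since $\rho(\varepsilon)\to\infty$).
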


We are concerned only with behavior near infinity, so we can assume that $M$
is diffeomorphic to $\mathbb{H}^n$ minus a horoball. We then set up notations.

Before going to the proof of Theorem \ref{finiteness}, we have the following
elementary lemma concerning the integral of $\cosh^{- 2 \tau + 1} r$ on the
regions defined as
\begin{align}
  I_1 & = \{\varepsilon_1 \leqslant x^n \leqslant 1\} \cap \{\rho
  (\varepsilon_1) \leqslant | \hat{x} | \leqslant \rho (\varepsilon_2)\},
  \\
  I_2 & = \{\varepsilon_2 \leqslant x^n \leqslant \varepsilon_1 \} \cap \{|
  \hat{x} | \leqslant \rho (\varepsilon_2)\}, \\
  I_3 & = \{1 \leqslant x^n \leqslant \varepsilon^{- 1}_1 \} \cap \{\rho
  (\varepsilon_1) \leqslant | \hat{x} | \leqslant \rho (\varepsilon_2)\},
  \\
  I_4 & = \{\varepsilon_1^{- 1} \leqslant x^n \leqslant \varepsilon_2^{- 1} \}
  \cap \{| \hat{x} | \leqslant \rho (\varepsilon_2)\} . 
\end{align}
\begin{lemma}
  \label{small terms in annular region}Assume that $\rho (\varepsilon) \to
  \infty$ as $\varepsilon \to 0$, we have that for $k \in \{1, 2, 3, 4\}$,
  \begin{equation}
    \int_{I_k} \cosh^{- 2 \tau + 1} r \mathrm{d} \bar{v} \to 0 \label{only one
    kind small term}
  \end{equation}
  if $\varepsilon_1 \to 0$ and $\varepsilon_2 \to 0$.
\end{lemma}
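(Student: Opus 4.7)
The plan is to pass to cylindrical coordinates on the half-space, use the explicit distance formula, and then decouple the two variables via a symmetric Young-type split of the exponent so that the $|\hat x|$- and $x^n$-integrals factor.

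Write $t = x^n$ and $u = |\hat x|$, so that $d\bar v = u^{n-2} t^{-n}\,du\,dt\,d\omega$ with $d\omega$ the standard measure on $S^{n-2}$. The distance formula \eqref{distance formula} gives
\begin{equation}
\cosh^{-2\tau+1} r \;=\; 2^{2\tau-1}\,\frac{t^{2\tau-1}}{(u^2+t^2+1)^{2\tau-1}}.
\end{equation}
Thus, up to the harmless angular factor, in each $I_k$ the integrand of \eqref{only one kind small term} is a constant multiple of
\begin{equation}
\frac{u^{n-2}\, t^{2\tau-n-1}}{(u^2+t^2+1)^{2\tau-1}}.
\end{equation}

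The main obstacle is that the denominator mixes $u$ and $t$. To decouple them I would use the elementary bound $u^2+t^2+1 \geq \tfrac12\bigl((u^2+1)+(t^2+1)\bigr)$ together with $A+B \geq 2\sqrt{AB}$ to get
\begin{equation}
(u^2+t^2+1)^{2\tau-1} \;\geq\; C\, (u^2+1)^{\tau-\frac12}\,(t^2+1)^{\tau-\frac12}.
\end{equation}
This symmetric split is exactly the one that works because $\tau > n/2$ gives strictly more than we need on each side. With this bound the integrand is dominated by a product $f(u)\,g(t)$ with
\begin{equation}
f(u) = \frac{u^{n-2}}{(u^2+1)^{\tau-\frac12}}, \qquad g(t) = \frac{t^{2\tau-n-1}}{(t^2+1)^{\tau-\frac12}}.
\end{equation}
Note that $f\in L^1(0,\infty)$ since its tail behaves like $u^{n-2\tau-1}$ with $n-2\tau-1<-1$, and $g$ is integrable near $t=\infty$ (behaves like $t^{-n}$) and near $t=0$ integrable because $2\tau-n-1>-1$.

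After the split, each region reduces to a product of a one-dimensional $u$-integral and a one-dimensional $t$-integral. For $I_2$, the $u$-integral is finite and the $t$-integral is $\int_{\varepsilon_2}^{\varepsilon_1} t^{2\tau-n-1}\,dt \leq C\,\varepsilon_1^{2\tau-n}\to 0$. For $I_4$ symmetrically, the $u$-integral is bounded and the $t$-integral $\int_{1/\varepsilon_1}^{\infty} g(t)\,dt \leq C\,\varepsilon_1^{\,n-1}\to 0$. For $I_1$ and $I_3$ the $u$-integral is over $[\rho(\varepsilon_1),\infty)$ and contributes $C\,\rho(\varepsilon_1)^{n-2\tau}\to 0$, while the respective $t$-integrals are uniformly bounded by the $L^1$ properties of $g$ on $(0,1]$ and $[1,\infty)$. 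Combining the four cases yields \eqref{only one kind small term}. The only subtlety to verify carefully is that in every case the factor that does not vanish is in fact uniformly bounded as $\varepsilon_1,\varepsilon_2\to 0$, which is precisely what the integrability of $f$ and of $g$ on the relevant half-lines provides.
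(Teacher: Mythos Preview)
Your argument is correct. The symmetric AM--GM split
\[
(u^2+t^2+1)^{2\tau-1}\;\geq\;(u^2+1)^{\tau-\frac12}(t^2+1)^{\tau-\frac12}
\]
does exactly what is needed: it factors the integrand into $f(u)g(t)$ with both factors in $L^1(0,\infty)$ precisely because $\tau>n/2$, and then in each of the four regions one of the two one-dimensional integrals is taken over a set escaping to $0$ or to $\infty$ while the other stays uniformly bounded. The checks you list for $f$ near $u=\infty$ and for $g$ near $t=0$ and $t=\infty$ are the right ones.

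The paper proceeds differently: it treats the four regions by hand with region-specific lower bounds on $u^2+t^2+1$. For $I_1$ and $I_2$ it simply drops the $t$-contribution from the denominator (using $u^2+t^2+1\geq u^2$ or $\geq u^2+1$), putting all the decay into the $|\hat x|$-variable; for $I_3$ and $I_4$ it introduces an auxiliary exponent $p\in(1,2\tau-1)$ and splits $(u^2+t^2+1)^{-2\tau+1}$ asymmetrically so that part of the decay is absorbed by $(x^n)^{-p}$ and the rest by $(u^2+1)$. Your single symmetric split replaces these four separate estimates by one inequality and avoids the auxiliary parameter entirely; it is cleaner and makes the role of the hypothesis $\tau>n/2$ more transparent (it is used exactly twice, once for each factor). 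The paper's approach, on the other hand, is slightly more explicit about which variable carries the smallness in each region and would adapt more easily if the decay exponent were anisotropic.
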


\begin{proof}
  We deal with $I_1$ first. From {\eqref{distance formula}},
  \begin{align}
    & \int_{I_1} \cosh^{- 2 \tau + 1} r \mathrm{d} \bar{v} \\
    = & \int_{\varepsilon_1}^1 \mathrm{d} x^n \int_{\rho (\varepsilon_1)
    \leqslant | \hat{x} | \leqslant \rho (\varepsilon_2)} (x^n)^{- n + 1}
    \cosh^{- 2 \tau + 1} r \mathrm{d} \hat{x} \\
    = & \int^1_{\varepsilon_1} (x^n)^{- n + 1} \mathrm{d} x^n \int_{\rho
    (\varepsilon_1) \leqslant | \hat{x} | \leqslant \rho (\varepsilon_2)}
    (\frac{| \hat{x} |^2 + (x^n)^2 + 1}{2 x^n})^{- 2 \tau + 1} \mathrm{d}
    \hat{x} \\
    \leqslant & 2^{1 - 2 \tau} \int^1_{\varepsilon_1} (x^n)^{- n + 2 \tau}
    \mathrm{d} x^n \int_{\rho (\varepsilon_1) \leqslant | \hat{x} | \leqslant
    \rho (\varepsilon_2)} | \hat{x} |^{2 - 4 \tau} \mathrm{d} \hat{x}
    \\
    \leqslant & C \int_{\rho (\varepsilon_1)}^{\rho (\varepsilon_2)} s^{2 - 4
    \tau + n - 2} \mathrm{d} s 
\end{align}
  Note that $- n + 2 \tau > 0$ and $2 - 4 \tau + n - 2 < - n < - 1$, this is
  $o (1)$ obviously as long as as $\varepsilon \to 0$, $\rho (\varepsilon)
  \nearrow \infty$. Similarly, on the region $I_2$,
  \begin{align}
    & \int_{I_2} \cosh^{- 2 \tau + 1} r \mathrm{d} \bar{v} \\
    \leqslant & 2^{1 - 2 \tau} \int_{\varepsilon_2}^{\varepsilon_1} (x^n)^{- n
    + 2 \tau} \mathrm{d} x^n \int_0^{\rho (\varepsilon_2)} (\rho^2 + 1)^{- 2
    \tau + 1} \rho^{n - 2} \mathrm{d} s 
\end{align}
  which is also $o (1)$ as $\varepsilon_1 \to 0$ and $\varepsilon_2 \to 0$.
  
  For the integral over $I_3$,
  \begin{equation}
    \int_1^{\varepsilon_1^{- 1}} \mathrm{d} x^n \int_{\rho_1 \leqslant |
    \hat{x} | \leqslant \rho_2} (x^n)^{- n + 1} \cosh^{- 2 \tau + 1} r
    \mathrm{d} \hat{x}
  \end{equation}
  We have that
  \begin{align}
    & \int_1^{\varepsilon_1^{- 1}} \int_{\rho_1 \leqslant | \hat{x} |
    \leqslant \rho_2} \left( \tfrac{| \hat{x} |^2 + (x^n)^2 + 1}{2 x^n}
    \right)^{- 2 \tau + 1} (x^n)^{- n + 1} \mathrm{d} \hat{x} \mathrm{d} x^n
    \\
    = & 2^{1 - 2 \tau} \int_1^{\varepsilon_1^{- 1}} \int_{\rho
    (\varepsilon_1)}^{\rho (\varepsilon_2)} (x^n)^{- n + 2 \tau} (s^2 +
    (x^n)^2 + 1)^{- 2 \tau + 1} \mathrm{d} s \mathrm{d} x^n \\
    \leqslant & C \int_1^{\varepsilon_1^{- 1}} \int_{\rho
    (\varepsilon_1)}^{\rho (\varepsilon_2)} (x^n)^{- n + 2 \tau} (s^2 +
    (x^n)^2 + 1)^{\tfrac{n - 2 \tau - p}{2} + (- 2 \tau + 1 - \tfrac{n - 2
    \tau - p}{2})} s^{n - 2} \mathrm{d} s \mathrm{d} x^n \\
    \leqslant & C \int_1^{\varepsilon_1^{- 1}} (x^n)^{- p} \mathrm{d} x^n
    \int_{\rho (\varepsilon_1)}^{\rho (\varepsilon_2)} (s^2 + 1)^{- 2 \tau + 1
    - \tfrac{n - 2 \tau - p}{2}} s^{n - 2} \mathrm{d} s \\
    \leqslant & C \int_1^{\infty} t^{- p} \mathrm{d} t \int_{\rho
    (\varepsilon_1)}^{\infty} (s^2 + 1)^{- 2 \tau + 1 - \tfrac{n - 2 \tau -
    p}{2}} s^{n - 2} \mathrm{d} s. 
\end{align}
  We fix some $p$ with $1 < p < 2 \tau - 1$. Then power
  \[ n - 2 + 2 (- 2 \tau + 1 - \tfrac{n - 2 \tau - p}{2}) \]
  is less than $- 1$. We see the integral is $o (1)$ as $\varepsilon_1 \to 0$.
  With a similar argument, on the region $I_4$,
  \begin{align}
    & \int_{\varepsilon_1^{- 1}}^{\varepsilon_2^{- 1}} \int_{| \hat{x} |
    \leqslant \rho_2} (x^n)^{- n + 1} \cosh^{- 2 \tau + 1} r \mathrm{d}
    \hat{x} \mathrm{d} x^n \\
    \leqslant & C \int_{\varepsilon_1^{- 1}}^{\varepsilon_2^{- 1}} t^{- p}
    \mathrm{d} t \int_0^{\rho (\varepsilon_2)} (s^2 + 1)^{- 2 \tau + 1 -
    \tfrac{n - 2 \tau - p}{2}} s^{n - 2} \mathrm{d} s \\
    \leqslant & C \int_{\varepsilon_1^{- 1}}^{\infty} t^{- p} \mathrm{d} t
    \int_0^{\infty} (s^2 + 1)^{- 2 \tau + 1 - \tfrac{n - 2 \tau - p}{2}} s^{n
    - 2} \mathrm{d} s 
\end{align}
  we fix the same $p$ as before, this integral is also $o (1)$.
\end{proof}

\begin{proof}[Proof of Theorem \ref{finiteness}]
  The proof is basically a restatement of the expansion we derived earlier.
  See {\cite{chrusciel-mass-2003}}. We have the expansion of the scalar
  curvature $R_g$ near $b$ that
  \begin{equation}
    R_g = - n (n - 1) + D R (e) + O (\mathrm{e}^{- 2 \tau r}) .
  \end{equation}
  The specific form of $O (e^{- 2 \tau r})$ is $O (|e|^2 + | \bar{\nabla} e|^2
  + |e|| \bar{\nabla}^2 e|)$ (See for example {\cite{chai-evaluation-2018}}).
  Here,
  \begin{equation}
    D R (e) =\ensuremath{\operatorname{div}} (\ensuremath{\operatorname{div}}e
    - \mathrm{d} E) + (n - 1) E
  \end{equation}
  is the linearization operator of the scalar curvature. We have
  \begin{equation}
    V D R (e) = \langle D^{\ast} R (V), e \rangle + \bar{\nabla}_i
    \mathbb{U}^i
  \end{equation}
  where $D^{\ast} R = \bar{\nabla}^2 V - V b$ is the formal $L^2$ adjoint of
  $D R$. Since $V$ is the static potential {\eqref{static potential full}}, we
  have
  \begin{equation}
    V (R_g + n (n - 1)) = \bar{\nabla}_i \mathbb{U}^i + O (\mathrm{e}^{- 2
    \tau r + r}) . \label{expansion of scalar curvature near infinity}
  \end{equation}
  Now we integrate {\eqref{expansion of scalar curvature near infinity}} over
  the region $A = A_{\varepsilon_2, \varepsilon_1}$, we see that
  \begin{align}
    & \int_A V (R_g + n (n - 1)) \\
    = & \int_{\partial' C (\varepsilon_2)} \mathbb{U}^i \bar{\eta}_i -
    \int_{\partial' C (\varepsilon_1)} \mathbb{U}^i \bar{\eta}_i + \int_A O
    (\mathrm{e}^{- 2 \tau r + r}) + \int_{a_{\varepsilon_2, \varepsilon_1}}
    \mathbb{U}^i \bar{\eta}_i . 
\end{align}
  Using {\eqref{divergence horosphere}}, so
  \begin{align}
    & \int_A V (R_g + n (n - 1)) \\
    = & \int_{\partial' C (\varepsilon_2)} \mathbb{U}^i \bar{\eta}_i -
    \int_{\partial' C (\varepsilon_1)} \mathbb{U}^i \bar{\eta}_i + \int_A O
    (\mathrm{e}^{- 2 \tau r + r}) \\
    & + \int_{a_{\varepsilon_2, \varepsilon_1}} [- \partial_{\alpha} (V
    e_{\alpha n}) - 2 V (H_g + n - 1)] . 
\end{align}
  Using divergence theorem on $a_{\varepsilon_2, \varepsilon_1}$, we have that
  \begin{align}
    & (\int_{\partial' C (\varepsilon_2)} \mathbb{U}^i \bar{\eta}_i -
    \int_{\partial c_{\varepsilon_2}} V e_{\alpha n} \theta^{\alpha}) -
    (\int_{\partial' C (\varepsilon_2)} \mathbb{U}^i \bar{\eta}_i -
    \int_{\partial c_{\varepsilon_1}} V e_{\alpha n} \theta^{\alpha})
    \\
    = & \int_A V (R_g + n (n - 1)) + 2 \int_{a_{\varepsilon_2, \varepsilon_1}}
    V (H_g + n - 1) + \int_A O (\mathrm{e}^{- 2 \tau r + r}) \\
    = & \int_A O (\mathrm{e}^{- 2 \tau r + r}) + \int_{a_{\varepsilon_2,
    \varepsilon_1}} O (\mathrm{e}^{- 2 \tau r + r}) = o (1) \label{mass well
    defined} 
\end{align}
  by Lemma \ref{small terms in annular region}, {\eqref{distance formula}},
  and the integrability of $V (R_g + n (n - 1))$ and $V (H_g + n - 1)$.
  Therefore, we have shown that $\mathbf{M} (V)$ exists and is finite.
\end{proof}

Now we turn to the proof of Theorem \ref{evaluation}.

\begin{proof}[Proof of Theorem \ref{evaluation}]
  We use the method of {\cite{herzlich-computing-2016}}. We define a cutoff
  function $\chi$ which vanish inside $C (\varepsilon^{\tfrac{1}{2}})$, equals
  1 outside $C (\varepsilon^{\tfrac{3}{4}})$ and
  \[ \hat{g} = \chi g + (1 - \chi) b \]
  The cutoff function is a product of two cutoff functions $\chi = \chi_1
  (x^n) \chi_2 (| \hat{x} |)$. The function $\chi_1 (x^n) = f (- \log x^n)$
  where $f (t)$ is the cutoff vanish inside $[0, - \tfrac{1}{2} \log
  \varepsilon]$ and equal to 1 in $[- \tfrac{3}{4} \log \varepsilon, \infty)$
  with the estimate
  \begin{equation}
    f + (\log \varepsilon) |f' | + (\log \varepsilon)^2 |f'' | \leqslant C.
  \end{equation}
  We find then
  \[ | \nabla \chi_1 |_b = x^n | \tfrac{\partial \chi_1}{\partial x^n} | = x^n
     \cdot \tfrac{1}{x^n} f' (- \log x^n) \leqslant \tfrac{C}{\log
     \varepsilon} \leqslant C \]
  and similarly $| \nabla^2 \chi_1 |_b \leqslant \tfrac{C}{(\log
  \varepsilon)^2} \leqslant C$. We define $\chi_2 (t)$ to be the standard
  cutoff which vanishes inside $[0, \varepsilon^{- \tfrac{1}{2} \alpha}]$ and
  is equal to 1 in $[\varepsilon^{- \tfrac{3}{4} \alpha}, \infty)$ with the
  estimate
  \begin{equation}
    0 \leqslant \chi_2 \leqslant 1, | \chi_2' | \leqslant C (\varepsilon^{-
    \tfrac{3}{4} \alpha} - \varepsilon^{- \tfrac{1}{2} \alpha})^{- 1}, |
    \chi_2' |^2 \leqslant C (\varepsilon^{- \tfrac{3}{4} \alpha} -
    \varepsilon^{- \tfrac{1}{2} \alpha})^{- 2} .
  \end{equation}
  It is easy to check that
  \begin{equation}
    | \chi_2 | + | \bar{\nabla} \chi_2 | + | \bar{\nabla}^2 \chi_2 | \leqslant
    C
  \end{equation}
  when $x^n \in (0, \varepsilon^{- 1}]$ due to the condition $\alpha >
  \tfrac{4}{3}$. From the construction of the cutoff function, $\hat{g}$ is an
  asymptotically hyperbolic metric, that is the difference $\hat{e} = \hat{g}
  - b$ satisfies also the decay rate {\eqref{decay rate}}.
  
  The reader might want to visit the the model $\mathrm{e}^{2 t} | \mathrm{d}
  \hat{x} |^2 + \mathrm{d} t^2$ where $t = - \log x^n$ for a better
  understanding of the construction of the cutoff function. We use mostly the
  upper half space model because of the conformality used in calculation of
  conformal Killing vectors.
  
  We shall also denote by $\hat{g}$ the complete metric obtained by gluing the
  hyperbolic metric inside $C (\varepsilon^{\tfrac{1}{4}})$ and the metric $g$
  outside $C_{\varepsilon}$.
  
  By divergence theorem, we have on $A = C_{\varepsilon} \backslash C
  (\varepsilon^{\tfrac{1}{4}})$ by the divergence theorem and the second
  Bianchi identity that
  \begin{equation}
    \int_{\partial A} \hat{G} (X, \hat{\nu}) \mathrm{d} \hat{\sigma} = \int_A
    \hat{\nabla}^i (\hat{G}_{i j} X^j) \mathrm{d} \hat{v} = \int_A \hat{G}_{i
    j} \hat{\nabla}^i X^j \mathrm{d} \hat{v} . \label{divergence bianchi}
  \end{equation}
  We first analyze the term $\int_A \hat{G}_{i j} \hat{\nabla}^i X^j
  \mathrm{d} \hat{v}$. We use
  \begin{align}
    \hat{\nabla}^i X^j = & h^{i l} (\bar{\nabla}_l X^j + X^k (\hat{\Gamma}_{i
    k}^l - \bar{\Gamma}_{i k}^l)) \\
    = & \bar{\nabla}^i X^j + (\hat{g}^{i l} - b^{i l}) \bar{\nabla}_l X^j +
    h^{i l} X^k (\hat{\Gamma}_{i k}^l - \bar{\Gamma}_{i k}^l) . 
\end{align}
  In fact $\int_A \hat{G}_{i j} (\hat{g}^{i l} - b^{i l}) \bar{\nabla}_l X^j
  \mathrm{d} \hat{v}$ is $o (1)$ by noting that the decay of $\hat{G}^i_j
  (\hat{g}^{i l} - b^{i l}) \bar{\nabla}_l X^j = O (\cosh^{- 2 \tau} r|
  \bar{\nabla} X|)$ and the growth rate {\eqref{vector lenght grwoth}}.
  Similarly $\int_A \hat{G}_{i j} \hat{g}^{i l} X^k (\hat{\Gamma}_{i k}^l -
  \bar{\Gamma}_{i k}^l) \mathrm{d} \hat{v}$ is $o (1)$.
  \begin{align}
    & \int_A \hat{G}_{i j} \bar{\nabla}^i X^j \mathrm{d} \hat{v} \\
    = & \tfrac{1}{2} \int_{\mathcal{A}} \hat{G}_{i j} (\bar{\nabla}^i X^j +
    \bar{\nabla}^j X^i) \mathrm{d} \hat{v} \\
    = & \tfrac{1}{n} \int_A \ensuremath{\operatorname{div}}^b X \hat{G}_{i j}
    b^{i j} \mathrm{d} \hat{v} \\
    = & \tfrac{1}{n} \int_A \ensuremath{\operatorname{div}}^b X \hat{G}_{i j}
    \hat{g}^{i j} \mathrm{d} \hat{v} - \tfrac{1}{n} \int_A
    \ensuremath{\operatorname{div}}^b X \hat{G}_{i j} (\hat{g}^{i j} - b^{i
    j}) \mathrm{d} \hat{v} \\
    = & \tfrac{2 - n}{2 n} \int_A \ensuremath{\operatorname{div}}^b X (\hat{R}
    + n (n - 1)) \mathrm{d} \hat{v} - \tfrac{1}{n} \int_A
    \ensuremath{\operatorname{div}}^b X \hat{G}_{i j} (h^{i j} - b^{i j})
    \mathrm{d} \hat{v} 
\end{align}
  We have that similarly the two terms $\int_A
  \ensuremath{\operatorname{div}}^b X \hat{G}_{i j} (h^{i j} - b^{i j})
  \mathrm{d} \hat{v}$ and $\int_A \ensuremath{\operatorname{div}}^b X (\hat{R}
  + n (n - 1)) (\mathrm{d} \hat{v} - \mathrm{d} \bar{v})$ are $o (1)$.
  Therefore,
  \begin{equation}
    \int_A \hat{G}_{i j} \hat{\nabla}^i X^j \mathrm{d} \hat{v} = \tfrac{2 -
    n}{2 n} \int_A \ensuremath{\operatorname{div}}^b X (\hat{R} + n (n - 1))
    \mathrm{d} \bar{v} + o (1) . \label{interior brown york}
  \end{equation}
  We are concerned about $\int_a \hat{G} (X, \hat{\nu}) \mathrm{d}
  \hat{\sigma}$. Denote $a = c_{\varepsilon} \backslash c
  (\varepsilon^{\tfrac{1}{4}})$. Since $X$ is tangent to the $\mathcal{H}$, we
  use Gauss-Codazzi equation, we find that
  \begin{equation}
    \int_a \hat{G} (X, \nu) \mathrm{d} \hat{\sigma} = \int_a X^{\beta}
    \hat{D}^{\alpha} (\hat{A}_{\alpha \beta} - \hat{H} \hat{\rho}_{\alpha
    \beta}) \mathrm{d} \hat{\sigma} . \label{gauss codazzi on horosphere}
  \end{equation}
  We add an extra term to the tensor $\hat{A} - \hat{H} \hat{\rho}$ so that it
  is trace free if $g$ is just the hyperbolic metric $b$. The modification is
  in spirit similar to the modified Einstein tensor $G$. So

  \begin{equation}
    \int_a \hat{G} (X, \hat{\nu}) \mathrm{d} \hat{\sigma} = \int_a X^{\beta}
    \hat{D}^{\alpha} (\hat{A}_{\alpha \beta} - \hat{H} \hat{\rho}_{\alpha
    \beta} - (n - 2) h_{\alpha \beta}) \mathrm{d} \hat{\sigma} .
  \end{equation}
  We apply the divergence theorem, we obtain
  \begin{equation}
    \int_a \hat{G} (X, \hat{\nu}) \mathrm{d} \hat{\sigma} = \int_{\partial a}
    \hat{W} (X, \hat{\mu}) \mathrm{d} \hat{\lambda} - \int_a \hat{W}_{\alpha
    \beta} \hat{D}^{\alpha} X^{\beta} \mathrm{d} \hat{\sigma} .
    \label{divergence gauss codazzi}
  \end{equation}
  Now we analyze the term $\int_a \hat{W}_{\alpha \beta} \hat{D}^{\alpha}
  X^{\beta} \mathrm{d} \hat{\sigma}$. We use
  \begin{align}
    \hat{D}^{\alpha} X^{\beta} = & \hat{\rho}^{\alpha \gamma}
    (\bar{D}_{\gamma} X^{\beta} + X^{\xi} (\hat{\Lambda}_{\gamma \xi}^{\beta}
    - \bar{\Lambda}_{\gamma \xi}^{\beta})) \\
    = & \bar{D}^{\alpha} X^{\beta} + (\hat{\rho}^{\alpha \gamma} -
    \bar{\rho}^{\alpha \gamma}) \bar{D}_{\gamma} X^{\beta} +
    \hat{\rho}^{\alpha \gamma} X^{\xi} (\hat{\Lambda}_{\gamma \xi}^{\beta} -
    \bar{\Lambda}_{\gamma \xi}^{\beta}) . 
\end{align}
  The terms $\int_a \hat{W}_{\alpha \beta} (\hat{\rho}^{\alpha \gamma} -
  \bar{\rho}^{\alpha \gamma}) \bar{D}_{\gamma} X^{\beta} \mathrm{d}
  \hat{\sigma}$ and $\int_a \hat{\rho}^{\alpha \gamma} X^{\xi}
  (\hat{\Lambda}_{\gamma \xi}^{\beta} - \bar{\Lambda}_{\gamma \xi}^{\beta})
  \mathrm{d} \hat{\sigma}$ are $o (1)$.
  
  We know that
  \begin{align}
    & \int_a \hat{W}_{\alpha \beta} \bar{D}^{\alpha} X^{\beta} \mathrm{d}
    \hat{\sigma} \\
    = & \tfrac{1}{2} \int_a \hat{W}_{\alpha \beta} (\bar{D}^{\alpha} X^{\beta}
    + \bar{D}^{\beta} X^{\alpha}) \mathrm{d} \hat{\sigma} \\
    = & \tfrac{1}{n - 1} \int_a
    \ensuremath{\operatorname{div}}^b_{\mathcal{H}} X \bar{\rho}^{\alpha
    \beta} \hat{W}_{\alpha \beta} \mathrm{d} \hat{\sigma} \\
    = & \tfrac{2 - n}{n - 1} \int_a
    \ensuremath{\operatorname{div}}^b_{\mathcal{H}} X \hat{\rho}^{\alpha
    \beta} \hat{W}_{\alpha \beta} \mathrm{d} \hat{\sigma} - \tfrac{1}{n - 1}
    \int_a \ensuremath{\operatorname{div}}^b_{\mathcal{H}} X
    (\hat{\rho}^{\alpha \beta} - \bar{\rho}^{\alpha \beta}) \hat{W}_{\alpha
    \beta} \mathrm{d} \hat{\sigma} 
\end{align}
  We have that $\int_a \ensuremath{\operatorname{div}}^b_{\mathcal{H}} X
  (\hat{\rho}^{\alpha \beta} - \bar{\rho}^{\alpha \beta}) \hat{W}_{\alpha
  \beta} \mathrm{d} \hat{\sigma}$ and $\int_a
  \ensuremath{\operatorname{div}}^b_{\mathcal{H}} X \hat{\rho}^{\alpha \beta}
  \hat{W}_{\alpha \beta} (\mathrm{d} \hat{\sigma} - \mathrm{d} \bar{\sigma})$
  are $o (1)$. Therefore,
  \begin{equation}
    \int_a \hat{W}_{\alpha \beta} \hat{D}^{\alpha} X^{\beta} \mathrm{d}
    \hat{\sigma} = \tfrac{2 - n}{n - 1} \int_a
    \ensuremath{\operatorname{div}}^b_{\mathcal{H}} X (\hat{H} + n - 1)
    \mathrm{d} \bar{\sigma} + o (1) . \label{boundary brown york}
  \end{equation}
  We have from {\eqref{divergence bianchi}}, {\eqref{interior brown york}} ,
  {\eqref{divergence gauss codazzi}} and {\eqref{boundary brown york}} that
  \begin{align}
    & \int_{\partial A} \hat{G} (X, \hat{\nu}) \mathrm{d} \hat{\sigma}
    \\
    = & \int_{(\partial A) \backslash a} \hat{G} (X, \hat{\nu}) \mathrm{d}
    \hat{\sigma} + \int_a \hat{G} (X, \hat{\nu}) \mathrm{d} \hat{\sigma}
    \\
    = & \int_{(\partial A) \backslash a} \hat{G} (X, \hat{\nu}) \mathrm{d}
    \hat{\sigma} + \int_{\partial a} \hat{W} (X, \hat{\mu}) \mathrm{d}
    \hat{\lambda} - \tfrac{2 - n}{n - 1} \int_a
    \ensuremath{\operatorname{div}}^b_{\mathcal{H}} X (\hat{H} + n - 1)
    \mathrm{d} \bar{\sigma} \\
    = & \tfrac{2 - n}{2 n} \int_A \ensuremath{\operatorname{div}}^b X (\hat{R}
    + n (n - 1)) \mathrm{d} \bar{v} + o (1) . 
\end{align}
  Since $\ensuremath{\operatorname{div}}^b X = n V$ and
  $\ensuremath{\operatorname{div}}_{\mathcal{H}}^b X = (n - 1) V$ along
  $\mathcal{H}$, by the same argument arriving {\eqref{mass well defined}}, we
  have
  \begin{align}
    & \tfrac{1}{2} (2 - n) \mathbf{M} (V) + o (1) \\
    = & \tfrac{2 - n}{2 n} \int_A \ensuremath{\operatorname{div}}^b X (\hat{R}
    + n (n - 1)) \mathrm{d} \bar{v} + \tfrac{2 - n}{n - 1} \int_a
    \ensuremath{\operatorname{div}}^b_{\mathcal{H}} X (\hat{H} + n - 1)
    \mathrm{d} \bar{\sigma} \\
    = & \int_{(\partial A) \backslash a} \hat{G} (X, \hat{\nu}) \mathrm{d}
    \hat{\sigma} + \int_{\partial a} \hat{W} (X, \hat{\mu}) \mathrm{d}
    \hat{\lambda} . 
\end{align}
  Now since the metric $\hat{g}$ is standard inside $C
  (\varepsilon^{\tfrac{1}{2}})$ and $\hat{g} = g$ outside $C
  (\varepsilon^{\tfrac{3}{4}})$, we have the desired formula.
\end{proof}

\begin{remark}
  It might be possible to drop the condition that $\rho (\varepsilon) =
  \varepsilon^{- \alpha}$ where $\alpha > \tfrac{4}{3}$ using the expansion
  and the method developed by the author {\cite{chai-evaluation-2018}}.
\end{remark}

\

\

\end{document}